\definecolor{cadmiumgreen}{rgb}{0.0, 0.42, 0.24}
\newtheorem*{thm*}{Theorem}
\theoremstyle{definition}
\newcommand{\lpr}[1]{\left(#1\right)}
\newcommand{\lcr}[1]{\left\{#1\right\}}
\theoremstyle{definition}
\newtheorem{theorem}[equation]{Theorem}
\newtheorem{proposition}[equation]{Proposition}
\newtheorem{lemma}[equation]{Lemma}
\newtheorem{conjecture}[equation]{Conjecture}
\newtheorem{example}[equation]{Example}
\newcommand{\appref}[1]{\hyperref[#1]{Appendix \ref{#1}}}
\newcommand{\claimref}[1]{\hyperref[#1]{Claim \ref{#1}}}
\newcommand{\conjref}[1]{\hyperref[#1]{Conjecture \ref{#1}}}
\newcommand{\corref}[1]{\hyperref[#1]{Corollary \ref{#1}}}
\newcommand{\defref}[1]{\hyperref[#1]{Definition \ref{#1}}}
\newcommand{\rmkref}[1]{\hyperref[#1]{Remark \ref{#1}}}
\newcommand{\exref}[1]{\hyperref[#1]{Example \ref{#1}}}
\newcommand{\figref}[1]{\hyperref[#1]{Figure \ref{#1}}}
\newcommand{\hwref}[1]{\hyperref[#1]{Homework \ref{#1}}}
\newcommand{\lemref}[1]{\hyperref[#1]{Lemma \ref{#1}}}
\newcommand{\probref}[1]{\hyperref[#1]{Problem \ref{#1}}}
\newcommand{\propref}[1]{\hyperref[#1]{Proposition \ref{#1}}}
\newcommand{\secref}[1]{\hyperref[#1]{\S\ref{#1}}}
\newcommand{\tabref}[1]{\hyperref[#1]{Table \ref{#1}}}
\newcommand{\thmref}[1]{\hyperref[#1]{Theorem \ref{#1}}}
\newcommand{\bp}{{\boldsymbol p}}
\newcommand{\bv}{{\boldsymbol v}}
\newcommand{\ga}{\alpha}     
\newcommand{\gd}{\delta}     
\newcommand{\gl}{\lambda}    
\newcommand{\gw}{\omega}      
\newcommand{\ket}[1]{\left\lvert#1\right\rangle}
\newcommand{\bra}[1]{\left\langle#1\right\rvert}
\newcommand{\braket}[2]{\left\langle#1|#2\right\rangle}
\newcommand{\half}{\frac{1}{2}}
\newcommand{\R}{\mathbb{R}}
\newcommand{\Z}{\mathbb{Z}}
\newcommand{\N}{\mathbb{N}}
\newcommand{\inv}{^{-1}}
\newcommand{\id}{\mathrm{id}}
\newcommand{\one}{{\bf1}}
\newcommand{\norm}[1]{\left\|#1\right\|}
\definecolor{codegreen}{rgb}{0,0.6,0}
\definecolor{codegray}{rgb}{0.5,0.5,0.5}
\definecolor{codepurple}{rgb}{0.58,0,0.82}
\definecolor{codeblue}{rgb}{0,0,0.95}
\definecolor{backcolor}{rgb}{0.95,0.95,0.92}
\title{Node resistance curvature in Cartesian graph products}
\author{Aleyah Dawkins}
\affil{George Mason University}
\author{Vishal Gupta}
\affil{University of Delaware}
\author{Mark Kempton}
\affil{Brigham Young University}
\author{William Linz}
\affil{University of South Carolina}
\author{Jeremy Quail}
\affil{University of Vermont}
\author{Harry Richman}
\affil{Fred Hutchinson Cancer Center}
\author{Zachary Stier}
\affil{University of California, Berkeley}
\date{}
\begin{document}

\maketitle

\begin{abstract}
    Devriendt and Lambiotte recently introduced the \emph{node resistance curvature}, a notion of graph curvature based on the effective resistance matrix. In this paper, we begin the study of the behavior of the node resistance curvature under the operation of the Cartesian graph product. We study the natural question of global positivity of node resistance curvature of the Cartesian product of positively-curved graphs, and prove that, whenever $m,n\ge3$, the node resistance curvature of the interior vertices of a $m\times n$ grid is always nonpositive, while it is always nonnegative on the boundary of such grids. For completeness, we also prove a number of results on node resistance curvature in $2\times n$ grids and exhibit a counterexample to a generalization. We also give generic bounds and suggest several further questions for future study. 
\end{abstract}


\section{Introduction}

There has been much interest in studying analogues of properties of Riemannian manifolds in the context of finite graphs (see the book of Chung~\cite{Chung}). For example, there are a number of different notions of Ricci curvature of graphs that have been studied~\cite{ollivier, LLY, forman}. Quite recently, Devriendt and Lambiotte~\cite{DL} defined and studied a new notion of Ricci curvature of graphs based on the effective resistance matrix (Devriendt, Ottolini and Steinerberger~\cite{DOS} also defined a closely related notion of curvature). We introduce their definition of node resistance curvature. 

Let $G = G(V,E)$ be a graph. Let $A$ be the (unnormalized) adjacency matrix of $G$, let $D$ be the diagonal degree matrix of $G$, and let $L = D - A$ be the (unnormalized) Laplacian matrix of $G$, where each is indexed by $V$; \textit{e.g.}, $D_{x,x}=\deg x$ and $L_{x,y}=-1$ whenever $xy\in E$. We may distinguish graphs using superscripts; \textit{e.g.}, for the graph $G^{(1)}$, vertex degrees are denoted by $\deg^{(1)}\cdot$. 

For vertices $x,y\in V$, let $\gw_{x,y}$ be the {\em effective resistance} across $x$ and $y$, computed as 
\begin{equation}
    \gw_{x,y} = (\bra{x}-\bra{y})L^ + (\ket{x}-\ket{y})=L^+_{x,x}+L^+_{y,y}-2L^+_{x,y}\label{eq:eff res}
\end{equation}
where $L^+$ is the Moore--Penrose pseudoinverse of $L$: 
$$\lpr{\sum\limits_{1\le j\le\#V}\gl_j\ket{\bv_j}\bra{\bv_j}}^+=\sum\limits_{\substack{1\le j\le\#V\\\gl_j\neq0}}\frac{1}{\gl_j}\ket{\bv_j}\bra{\bv_j}.$$
Then, we define the {\em node resistance curvature} as 
$$p_x = 1-\half\sum\limits_{y\sim x}\gw_{x,y}$$
for each node $x$.
We collect the node curvatures into the vector $\bp$, indexed by $V$. 
The vector $\bp$ obeys the rule \cite{DL} that
$$\sum\limits_{x\in V}p_x=1.$$
The (node resistance) curvature of the graph is said to be the value of the least entry in $\bp$. 

We give two examples to illustrate node resistance curvature. 

\begin{example}\label{ex:vtxtransitive}
If $G$ is vertex-transitive, then by symmetry $\bp=\frac{1}{n}\one$~\cite[Appendix C]{DL}. 
More generally, as noted in \cite{DOS}, the graphs with positive constant curvature are the \textit{resistance-regular graphs}~\cite{ZWB}. The family of resistance-regular graphs contains the family of walk-regular graphs, examples of which include the vertex-transitive and distance-regular graphs.
\end{example}

\begin{example}\label{ex:path}
The path graph $P_n$ has node resistance curvature $\half$ at the end nodes and 0 in the interior nodes \cite[\S3.1, Example 2]{DL}. 
\end{example}

In this paper, we study the behavior of the curvature $\bp$ under the operation of Cartesian product of graphs. As a first example, since the Cartesian product of two vertex-transitive graphs $G$ and $H$ is vertex-transitive, it follows as in \exref{ex:vtxtransitive} that $G\square H$ has constant positive curvature. 

While the vertex-transitive case is rendered somewhat degenerate by its inherent symmetry, we still wonder about the general case of Cartesian graph products. It is natural to wonder whether the product of two nonnegatively-curved graphs is always nonnegative. We study here the Cartesian product of paths, giving rise to {\em grids}. The path graph $P_n$ on $n$ vertices has 0 node resistance curvature on its ``interior'' vertices and positive node resistance curvature on its ``boundary'' vertices, where the interior vertices are precisely those with degree 2. We conjecture that a form of this behavior continues to hold for the product of two path graphs, in which the interior becomes (strictly) negative while the boundary remains (strictly) positive. The most general result we conjecture is: 

\begin{conjecture}\label{conj:general-product}
    For $G^{(1)}$ and $G^{(2)}$ any graphs, if $\bp_i^{(1)},\bp_j^{(2)}\le0$ then $\bp_{i\otimes j}<0$. 
\end{conjecture}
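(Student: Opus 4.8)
The plan is to rewrite $\bp_{i\otimes j}$ via the heat semigroup and reduce \conjref{conj:general-product} to a single analytic inequality relating the on-diagonal heat traces of the two factors. For any connected graph one has $\gw_{x,y}=\int_0^\infty\bigl(\bra x-\bra y\bigr)e^{-tL}\bigl(\ket x-\ket y\bigr)\,dt$, since $\ket x-\ket y$ is orthogonal to the constants and so the pseudoinverse may be replaced by $\int_0^\infty e^{-tL}\,dt$ in these directions. For the Cartesian product, $L^{\square}=L^{(1)}\otimes I+I\otimes L^{(2)}$ gives $e^{-tL^{\square}}=e^{-tL^{(1)}}\otimes e^{-tL^{(2)}}$, hence for a ``horizontal'' edge $i\otimes j\sim i'\otimes j$,
\[
  \gw^{\square}_{i\otimes j,\,i'\otimes j}=\int_0^\infty\bigl(\bra i-\bra{i'}\bigr)e^{-tL^{(1)}}\bigl(\ket i-\ket{i'}\bigr)\cdot\bra j e^{-tL^{(2)}}\ket j\,dt,
\]
and symmetrically for vertical edges. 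Setting $h^{(g)}_k(t):=\bra k e^{-tL^{(g)}}\ket k$ and $K^{(g)}_k(t):=\sum_{k'\sim k}\bigl(\bra k-\bra{k'}\bigr)e^{-tL^{(g)}}\bigl(\ket k-\ket{k'}\bigr)$ and summing over the neighbors of $i\otimes j$,
\[
  \bp_{i\otimes j}=1-\frac12\int_0^\infty\Bigl[K^{(1)}_i(t)\,h^{(2)}_j(t)+h^{(1)}_i(t)\,K^{(2)}_j(t)\Bigr]\,dt.
\]
The relevant features: $h^{(g)}_k$ is positive and decreasing from $h^{(g)}_k(0)=1$ to $1/n_g$; $K^{(g)}_k$ is nonnegative and decreasing from $K^{(g)}_k(0)=2\deg^{(g)}(k)$ to $0$, with $\int_0^\infty K^{(g)}_k(t)\,dt=\sum_{k'\sim k}\gw^{(g)}_{k,k'}=2\bigl(1-\bp^{(g)}_k\bigr)$; and $h^{(g)}_k-1/n_g$ and $K^{(g)}_k$ are both sums $\sum_{\gl\neq0}e^{-t\gl}(\,\cdot\,)$ over the same nonzero spectrum of $L^{(g)}$ with nonnegative coefficients, so they are tightly coupled.

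The reduction is then immediate. The hypothesis $\bp^{(1)}_i\le0$ says exactly $\int_0^\infty K^{(1)}_i(t)\,dt\ge2$, likewise $\int_0^\infty K^{(2)}_j(t)\,dt\ge2$, and $\bp_{i\otimes j}<0$ is exactly
\[
  \int_0^\infty\Bigl[K^{(1)}_i(t)\,h^{(2)}_j(t)+h^{(1)}_i(t)\,K^{(2)}_j(t)\Bigr]\,dt>2 .
\]
So it suffices to prove this inequality given the two mass constraints and the monotonicity/boundary data above. The intuition is that $K^{(1)}_i$ carries mass at least $2$ while starting at height $2\deg^{(1)}(i)$, hence is concentrated at small $t$ where $h^{(2)}_j\approx1$, so $\int K^{(1)}_i h^{(2)}_j$ should be only slightly below $\int K^{(1)}_i\ge2$, and symmetrically for the other term. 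The crude estimate $h^{(g)}_k\ge1/n_g$ only yields $\bp_{i\otimes j}\le1-\tfrac1{n_1}-\tfrac1{n_2}$, which is worthless, so the argument must make this concentration quantitative.

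I expect the quantification to be the main obstacle, and presumably it is why the statement is still a conjecture: for arbitrary nonnegative decreasing $K^{(1)}_i$ of mass $2$ and $h^{(2)}_j$ with the stated endpoints the inequality is genuinely false (take $K^{(1)}_i$ with a long low tail and $h^{(2)}_j$ collapsing instantly to $1/n_2$), so one must exploit the structure special to this setting: (i) the tail bound $K^{(1)}_i(t)\le2\deg^{(1)}(i)\,e^{-\gl^{(1)}_1 t}$ (from $e^{-tL^{(1)}}\preceq e^{-t\gl^{(1)}_1}I$ off the constants), which together with $\int_0^\infty K^{(1)}_i h^{(2)}_j\ge h^{(2)}_j(T)\int_0^T K^{(1)}_i$ for every $T$ controls the loss; and (ii) the fact that $K^{(1)}_i$ and $h^{(1)}_i$ are governed by the same operator — concretely $K^{(1)}_i(t)=-(L^{(1)}h^{(1)}(t))_i-2(h^{(1)}_i)'(t)$ and the shared-spectrum representation above — so a long tail in $K^{(1)}_i$ forces $h^{(1)}_i$ to stay large, feeding the second summand. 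I would first run this on products of paths, where the spectra of $L^{(1)},L^{(2)}$ are explicit and all the integrals are elementary, both as a consistency check against the grid theorems of this paper and to see which of the couplings are actually needed; then I would try to replace the explicit spectral data by a correlation (Chebyshev/FKG-type) inequality between the comonotone profiles $K^{(1)}_i$ and $h^{(2)}_j$, suitably renormalized to handle the infinite measure on $(0,\infty)$, which seems the most plausible engine for the general bound.
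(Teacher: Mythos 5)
The statement you are addressing is \conjref{conj:general-product}, which the paper leaves as an open conjecture: there is no proof of it anywhere in the paper (the authors only prove the special case of two-dimensional grids, \thmref{thm:grid curv}, and they do so by entirely different means, namely Rayleigh monotonicity against $\Z\square\Z$ together with finite computations on $P_3\square P_3$ and $P_3\square P_4$). Your proposal is likewise not a proof, and you say as much yourself. The setup is correct and worth having: the identity $\gw_{x,y}=\int_0^\infty(\bra x-\bra y)e^{-tL}(\ket x-\ket y)\,dt$ for connected graphs, the factorization $e^{-tL^{\square}}=e^{-tL^{(1)}}\otimes e^{-tL^{(2)}}$, and the resulting formula $\bp_{i\otimes j}=1-\tfrac12\int_0^\infty[K^{(1)}_i h^{(2)}_j+h^{(1)}_i K^{(2)}_j]\,dt$ all check out, as does the translation of the hypotheses into $\int_0^\infty K^{(g)}_{\cdot}\,dt\ge2$. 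This is a genuine, correct reduction of the conjecture to a single analytic inequality, and it is a different (and more structural) angle than anything in the paper.

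The gap is the inequality itself: nothing in your proposal establishes $\int_0^\infty[K^{(1)}_i h^{(2)}_j+h^{(1)}_i K^{(2)}_j]\,dt>2$. You correctly observe that the qualitative data you extract (monotonicity, endpoint values, total mass $\ge2$) cannot suffice, since you exhibit profiles satisfying all of it for which the inequality fails; so the entire burden falls on the ``coupling'' between $K^{(g)}_k$ and $h^{(g)}_k$ through the shared spectrum, and that step is only sketched as a direction (tail bounds, an FKG-type correlation inequality) without being carried out. Two further points to keep in mind if you pursue this: the conjecture demands \emph{strict} negativity, so even a successful correlation inequality must be shown to be strict (the grid case shows equality is approached: interior vertices of $\Z\square\Z$ have curvature exactly $0$, so any slack must come from finiteness); and the representation $\gw_{x,y}=\int_0^\infty(\bra x-\bra y)e^{-tL}(\ket x-\ket y)\,dt$ requires $\ket x-\ket y$ to be orthogonal to $\ker L$, so the argument as written assumes both factors are connected, a hypothesis the conjecture's phrasing (``any graphs'') does not grant. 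In short: a sound and potentially fruitful reformulation, but the statement remains as open after your proposal as it is in the paper.
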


This conjecture would partly imply the following: 

\begin{conjecture}\label{conj:path-product}
    For the product of two paths, the curvature $\bp$ is nonpositive on the interior and nonnegative on the boundary.
\end{conjecture}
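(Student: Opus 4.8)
\emph{Strategy.} The plan is to bound, for each vertex $x$, the quantity $\sum_{y\sim x}\omega_{x,y}$ that controls $p_x=1-\tfrac12\sum_{y\sim x}\omega_{x,y}$, by comparing the grid $G=P_m\square P_n$ with an auxiliary graph via Rayleigh's monotonicity law: if $H$ is a subgraph of $G$ and $u,v$ lie in the same component of $H$, then $\omega^H_{u,v}\ge\omega^G_{u,v}$ (deleting edges cannot decrease an effective resistance). I will also use Foster's theorem, $\sum_{e\in E}\omega_e=\#V-1$ for connected $G$, which is precisely the identity $\sum_x p_x=1$ quoted from \cite{DL} rewritten via $\sum_x\sum_{y\sim x}\omega_{x,y}=2\sum_e\omega_e$. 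Take $m,n\ge3$, so that $G$ has vertices of degree $2$ (the four corners), degree $3$ (the remaining boundary vertices), and degree $4$ (the interior vertices); the cases $m\le 2$ are degenerate since the interior is then empty. The corner case is immediate: if $x$ has degree $2$ with neighbors $y_1,y_2$, applying Rayleigh monotonicity to the subgraph consisting of the single edge $xy_i$ gives $\omega_{x,y_i}\le 1$, so $\sum_{y\sim x}\omega_{x,y}\le 2$ and $p_x\ge 0$.

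\emph{Interior vertices.} Here I need the reverse bound $\sum_{y\sim x}\omega_{x,y}\ge 2$. For $M\ge\max(m,n)$, embed $G$ as an induced subgraph of $P_M\square P_M$, which is a spanning subgraph of the torus $C_M\square C_M$; thus $G$ is a subgraph of $C_M\square C_M$. Since $C_M\square C_M$ is edge-transitive (translations together with the coordinate swap act transitively on its $2M^2$ edges), Foster's theorem forces every edge of $C_M\square C_M$ to have effective resistance $\tfrac{M^2-1}{2M^2}$. By Rayleigh monotonicity each of the four edges at $x$ satisfies $\omega^G_{x,y}\ge\tfrac{M^2-1}{2M^2}$, so $\sum_{y\sim x}\omega^G_{x,y}\ge 2-\tfrac{2}{M^2}$; letting $M\to\infty$ gives $\sum_{y\sim x}\omega^G_{x,y}\ge 2$, hence $p_x\le 0$. (Equivalently one may invoke the classical fact that adjacent vertices of $\Z^2$ have effective resistance $\tfrac12$ together with $G\subseteq\Z^2$.)

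\emph{Non-corner boundary vertices.} Such a vertex lies in the interior of one of the four sides; say it is $(1,i)$ on the left side with $2\le i\le n-1$, having neighbors $(1,i-1),(1,i+1),(2,i)$. Let $H$ be the $3\times 3$ block of $G$ on columns $\{1,2,3\}$ and rows $\{i-1,i,i+1\}$ (legitimate since $m,n\ge 3$): it is an induced subgraph containing $x$ and all three of its neighbors, and in $H$ the vertex $x$ is again a side-midpoint of degree $3$ with the same neighborhood. It therefore suffices to prove that a side-midpoint $v$ of the $3\times 3$ grid satisfies $\sum_{y\sim v}\omega_{v,y}=2$, for then Rayleigh monotonicity yields $\sum_{y\sim x}\omega^G_{x,y}\le\sum_{y\sim x}\omega^H_{x,y}=2$, i.e.\ $p_x\ge 0$. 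In the $3\times 3$ grid each of the $12$ edges is incident to exactly one of the four side-midpoints, and a $90^\circ$ rotation permutes those midpoints cyclically; hence $\sum_{v\text{ a midpoint}}\sum_{e\ni v}\omega_e=\sum_{e}\omega_e=\#V-1=8$ splits into four equal summands, so each midpoint's incident resistances sum to $8/4=2$, as needed.

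\emph{Main obstacle and loose ends.} The delicate point is the interior estimate $\sum_{y\sim x}\omega_{x,y}\ge 2$: unlike the other two cases it is \emph{tight only in the limit} (for the $3\times 3$ grid this sum equals $\tfrac73$, decreasing toward $2$ as the grid grows), so it cannot come from a finite gadget inside $G$ and genuinely requires the passage to arbitrarily large tori — equivalently, effective resistance on $\Z^2$ — a step that should be written with some care about limits of finite networks. The remaining routine checks cause no difficulty: that the $3\times 3$-block construction still makes sense when $m$ or $n$ equals $3$ (the block may then coincide with $G$), and that ``nonnegative on the boundary'' is consistent with the occurrence of equality $p_x=0$, e.g.\ at the side-midpoints of the $3\times 3$ grid.
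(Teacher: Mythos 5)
Your proof is correct, and it follows the same overall skeleton as the paper's: both arguments rest on Rayleigh monotonicity (the paper's ``curvature monotonicity,'' Proposition \ref{prop:k2}), comparing an interior vertex against the infinite grid from above and a boundary vertex against a small $3\times3$ block from below. Where you genuinely differ is in how the two required inputs are certified. For the interior, the paper cites the folklore fact that adjacent vertices of $\Z\square\Z$ have effective resistance exactly $\frac12$; you instead sandwich $P_m\square P_n$ inside the torus $C_M\square C_M$ and deduce the edge resistance $\frac{M^2-1}{2M^2}$ from edge-transitivity plus Foster's theorem, then let $M\to\infty$. This is self-contained and avoids any appeal to the infinite-network computation (your own caveat about ``limits of finite networks'' is actually moot: each torus is a finite graph containing $G$ as a subgraph, so the inequality $\sum_{y\sim x}\omega^G_{x,y}\ge 2-\frac{2}{M^2}$ holds for every $M$ and the limit is taken only of the numerical bound). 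For the boundary, the paper verifies the $3\times3$ (and $3\times4$) block curvatures by a Mathematica computation, whereas you prove by hand that each side-midpoint of $P_3\square P_3$ has incident resistances summing to exactly $2$, using the fact that every edge meets exactly one midpoint together with the $90^\circ$ rotation and Foster's theorem; you also dispatch the corners with the trivial bound $\omega_{x,y}\le 1$. The net effect is a computer-free proof of Conjecture \ref{conj:path-product} for $m,n\ge3$, at the cost of not recovering the strict positivity and the quantitative bound $\frac{17}{4830}$ that Theorem \ref{thm:grid curv} additionally extracts from the $3\times4$ computation.
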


We prove \conjref{conj:path-product} for graphs $P_m\square P_n$, where $m,n \ge 3$: 
\begin{theorem}\label{thm:grid curv}
    Consider the grid graph $P_m\square P_n$ with $m,n\ge3$. Then, the interior vertices all have negative node resistance curvature, and the boundary vertices all have nonnegative node resistance curvature. If further either $m>3$ or $n>3$ then the node resistance curvature on the boundary is positive with saturatable lower bound $\frac{17}{4830}\approx0.003$. 
\end{theorem}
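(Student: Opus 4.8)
\emph{Overview.} The plan is to work throughout from $p_x = 1 - \half\sum_{y\sim x}\gw_{x,y}$, so everything reduces to estimating effective resistance across each edge incident to $x$. On the grid every edge lies in one of the two factor directions, and there is a clean product formula: writing $L_{P_m\square P_n} = L^{(m)}\otimes I_n + I_m\otimes L^{(n)}$ and splitting the pseudoinverse along the eigenbasis $\{v^{(m)}_k\}_{k=0}^{m-1}$ of $L^{(m)}$, one obtains for an edge $(a,b)\sim(a+1,b)$ in the first factor
\[
\gw_{(a,b),(a+1,b)} \;=\; \sum_{k=1}^{m-1}\bigl(v^{(m)}_k(a)-v^{(m)}_k(a+1)\bigr)^2\,\bigl[(\gl^{(m)}_k I_n + L^{(n)})^{-1}\bigr]_{b,b},
\]
the $k=0$ (constant) eigenvector dropping out since $\ket{a}-\ket{a+1}\perp\one$, and symmetrically for edges in the second factor. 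For paths all ingredients are explicit: $\gl^{(m)}_k = 4\sin^2\frac{k\pi}{2m}$, $v^{(m)}_k(a) = \sqrt{2/m}\,\cos\frac{(2a-1)k\pi}{2m}$ for $k\ge1$, and since $zI_n+L^{(n)}$ is tridiagonal, $\bigl[(zI_n+L^{(n)})^{-1}\bigr]_{b,b} = \frac{\cosh\bigl(\gt(b-\half)\bigr)\cosh\bigl(\gt(n+\half-b)\bigr)}{\sinh(n\gt)\,\sinh\gt}$ with $2\cosh\gt = 2+z$.

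\emph{Interior vertices.} These I would dispatch by strict Rayleigh monotonicity. Realize $P_m\square P_n$ as the induced subgraph of $\Z^2$ on $[1,m]\times[1,n]$. For any edge $xy$ of the grid, $\gw_{x,y}$ strictly decreases as $m$ or $n$ grows — in the energy-minimizing unit current flow of the larger grid, current flows across some edge absent from the smaller one — and it converges to the classical value $\gw^{\Z^2}_{x,y} = \half$; hence $\gw_{x,y} > \half$ in $P_m\square P_n$. An interior vertex has exactly four neighbors, so $\sum_{y\sim x}\gw_{x,y} > 2$ and $p_x < 0$.

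\emph{Boundary vertices.} A corner $x$ has only its two neighbors $a,b$ and no other edges, so the Schur complement of $L$ onto $\{x,a,b\}$ is a triangle with unit conductances on $xa$ and $xb$ and conductance $1/r$ on $ab$, where $r = \gw^{(P_m\square P_n)-x}_{a,b}\in(0,\infty)$; reducing this triangle gives $p_x = \frac{1}{2+r} > 0$, so the corner case — and its sharp lower bound — follows from a crude upper bound on $r$. For a degree-$3$ boundary vertex, adjoining the remaining rows (or columns) of the grid only lowers each incident $\gw$, so $p_x$ is bounded below by its value in the width-$3$ strip $P_3\square P_n$ (or $P_m\square P_3$) containing the three relevant columns; there $m=3$ collapses the product formula to two terms — the $\gl^{(3)}_1 = 1$ term, governed by the golden-ratio recursion ($2\cosh\gt = 3$), and the $\gl^{(3)}_2 = 3$ term, governed by the $t^2-5t+1$ recursion — and from the resulting closed forms in $n$ one checks $\sum_{y\sim x}\gw_{x,y}\le 2$, i.e.\ $p_x\ge0$, with equality only in $P_3\square P_3$, whose edge-midpoint vertices are the genuinely borderline case (curvature exactly $0$).

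\emph{The sharp bound, and the main obstacle.} For the refined claim, I would verify that the closed-form boundary curvatures above are monotone in $m$ and $n$, which reduces the minimization of the positive boundary values to finitely many small grids; a direct computation then pins the minimum to $\frac{17}{4830}$, attained at a boundary vertex of $P_3\square P_4$ — consistent with $4830 = 2\,\tau(P_3\square P_4)$, since every effective resistance there is an integer multiple of $1/\tau$. The interior inequality is essentially free once the $\Z^2$ comparison is in hand; the real work, and the main obstacle, is the boundary analysis: extracting usable closed forms for $P_3\square P_n$ (and for the finitely many exceptional grids), proving $\sum_{y\sim x}\gw_{x,y}\le2$ uniformly in $n$, and confirming that $P_3\square P_3$ is the unique equality case — delicate precisely because there the relevant curvature is $0$, so the estimates must be kept tight.
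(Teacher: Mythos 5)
Your interior argument and your corner argument are both sound: the comparison with $\Z\square\Z$ via Rayleigh monotonicity (plus strictness from the current-flow argument) is exactly how the paper handles the interior, and the Schur-complement reduction giving $p_x=\frac{1}{2+r}>0$ at a degree-$2$ vertex is a clean observation, arguably more general than what the paper does for corners. The genuine gap is the degree-$3$ boundary case together with the sharp constant: you reduce these to ``extracting usable closed forms for $P_3\square P_n$'' and ``proving $\sum_{y\sim x}\gw_{x,y}\le 2$ uniformly in $n$,'' and then assert that ``one checks'' the inequality. That check is the entire content of the boundary half of the theorem; it is not performed, and as you have set it up it requires a uniform estimate over an infinite family whose equality case ($P_3\square P_3$) sits exactly at the boundary of the estimate --- precisely the situation in which ``one checks'' cannot substitute for a proof.

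The missing idea that collapses all of this to a finite verification is that the monotonicity comparison need not be against a strip of unbounded length: a $3\times 3$ window can always be slid along the boundary of $P_m\square P_n$ so that it contains the target boundary vertex \emph{with its degree preserved} (a corner of the window for a corner of the grid; the midpoint of a side of the window for a degree-$3$ boundary vertex). \propref{prop:k2} then gives $p^{P_3\square P_3}_{x}\le p^{P_m\square P_n}_{x}$, and the nonnegativity of all boundary curvatures of $P_3\square P_3$ is a single finite computation. Likewise, when $m>3$ or $n>3$ one slides a $3\times 4$ window instead, and the minimum boundary curvature of $P_3\square P_4$, computed once, is $\frac{17}{4830}$; saturation at $P_3\square P_4$ itself gives the ``saturatable'' claim. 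Your route through closed forms for $P_3\square P_n$ would, if completed, prove more (exact boundary asymptotics, uniqueness of the equality case), but it is far more work than the theorem requires, and it is exactly the step you have not supplied.
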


The paper is organized as follows. We provide some background and notation in \secref{sec:bg}. The proof of \thmref{thm:grid curv} is given in \secref{sec:wide}. We also study the node resistance curvatures of the ladder graphs $P_2 \square P_n$, where the results are somewhat different. For $P_2 \square P_n$, the nodes with positive node resistance curvature are only the ``corner'' nodes (\propref{prop:ladder} in \secref{sec:ladder}). In \secref{sec:gen} we present weak but general bounds and suggest some future directions in this area. 

\section{Background and notation}\label{sec:bg}

\subsection{Cartesian graph products}

Let $G = G^{(1)}\square G^{(2)}$ be the Cartesian product of two simple unweighted graphs $G^{(i)}(V^{(i)},E^{(i)})$, where the vertex set is $V^{(1)} \times V^{(2)}$, whose elements are written as $v^{(1)}\otimes v^{(2)}$, and $v^{(1)}\otimes v^{(2)}$'s neighbors are $\lcr{w^{(1)}\otimes v^{(2)}:w^{(1)}\sim v^{(1)}}\sqcup\lcr{v^{(1)}\otimes w^{(2)}:w^{(2)}\sim v^{(2)}}$. 
(We use the notation $v\otimes w \in V \times W$ rather than $(v, w) \in V \times W$.)

For a finite set $S$, let $\id_S$ be the identity map on the vector space $\R^S$. We also sometimes use bra-ket notation by granting $\R^S$ the Kronecker basis $\lcr{\ket{s}:s\in S}$, where $\norm{\ket{s}}^2=\braket{s}{s}=1$ and $\braket{s}{t}=\gd_{s,t}$ (which extends linearly to the standard inner product). Also $\bra{s}$ is the adjoint of $\ket{s}$, and we form outer products as $\ket{s}\bra{t}$. We also write other norm-1 elements of $\R^S$ as kets, e.g.\ $\ket{\bv}$, to highlight the normalization. 

From the definition of the Cartesian product, we can compute that 
\begin{align*}
    A&=A^{(1)}\otimes\id_{V^{(2)}}+\id_{V^{(1)}}\otimes A^{(2)}\\
    L&=L^{(1)}\otimes\id_{V^{(2)}}+\id_{V^{(1)}}\otimes L^{(2)}.
\end{align*}
where $M \otimes N$ denotes the Kronecker (tensor) product. 

Let a graph's eigenpairs refer to the eigenpairs of its unnormalized Laplacian. Then, if $G^{(i)}$ has eigenpairs $\lcr{\lpr{\gl^{(i)}_j,\ket{\bv^{(i)}_j}}:1\le j\le\#V^{(i)}}$ for $i = 1, 2$, with $\gl^{(i)}_j$ increasing in $j$, then $G$ has eigenpairs
$$\lcr{\lpr{\gl^{(1)}_{j_1}+\gl^{(2)}_{j_2},\ket{\bv^{(1)}_{j_1}}\otimes\ket{\bv^{(2)}_{j_2}}}:1\le j_i\le\#V^{(i)},i\in\{1,2\}}.$$

\subsection{Basic properties of electrical resistance}

Recall the definition of effective resistance \eqref{eq:eff res}. It obeys the {\em series} and {\em parallel} laws. Consider a multigraph $G(V,E)$ with edge weights (resistances) stored as $\ell_e$ for $e\in E$. 
\begin{itemize}
    \item The series law says that if any edge is replaced by a subdivision preserving the total length of the edge, then this does not change any other effective resistance calculations in the graph. 
    \item The parallel law says that if any edge $e$ of length $\ell_e$ is replaced by edges $e_1,\dots,e_n$ of lengths $\ell_{e_1},\dots,\ell_{e_n}$ such that $\ell_e\inv=\ell_{e_1}\inv+\cdots+\ell_{e_n}\inv$, then this does not change any other effective resistance calculations in the graph. 
\end{itemize}

We also have the following principle, intuitive from a physical understanding of circuitry. 
\begin{proposition}[{Rayleigh's monotonicity law, cf.\ \cite{DS}}]
    Consider a subgraph $G\subset H$. For any vertices $i,j\in V(G)$, we have that $\gw^G_{i,j}\ge\gw^H_{i,j}$. 
\end{proposition}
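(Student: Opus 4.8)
\emph{Proof strategy.} The plan is to deduce this from Thomson's principle --- the energy-minimization characterization of effective resistance --- after which the inclusion $G\subset H$ makes the inequality almost immediate. First I would record the variational formula: for a connected (multi)graph with edge resistances $\ell_e$ and fixed vertices $i\neq j$,
\[
\gw_{i,j}=\min\lcr{\,\textstyle\sum_{e\in E}\ell_e f_e^2 \;:\; f\text{ a unit flow from }i\text{ to }j\,},
\]
where a unit $i$-$j$ flow is an oriented assignment $f\colon E\to\R$ obeying Kirchhoff's node law at every vertex other than $i,j$ with net outflow $+1$ at $i$. One may cite \cite{DS} for this, or derive it directly from \eqref{eq:eff res}: the electrical flow $f^\star_{uv}=(\phi_u-\phi_v)/\ell_{uv}$ with $\phi=L^+(\ket{i}-\ket{j})$ satisfies $L\phi=\ket{i}-\ket{j}$, so it is a unit flow, and its energy equals $(\bra{i}-\bra{j})L^+(\ket{i}-\ket{j})=\gw_{i,j}$; for any other unit flow $f$ the difference $f-f^\star$ is a circulation, hence $\ell_e$-orthogonal to $f^\star$ on the edge space, whence $\sum_e\ell_e f_e^2=\gw_{i,j}+\sum_e\ell_e(f_e-f^\star_e)^2\ge\gw_{i,j}$.

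Granting the variational formula, the rest is short. Given $G\subset H$ with $i,j\in V(G)$ --- we may assume $H$ is connected, and if $i,j$ lie in different components of $G$ then $\gw^G_{i,j}=\infty$ and there is nothing to prove --- I would take a minimizing unit $i$-$j$ flow $f^G$ on $G$ and extend it to $H$ by setting $f^H_e=f^G_e$ for $e\in E(G)$ and $f^H_e=0$ for $e\in E(H)\setminus E(G)$. Kirchhoff's node law still holds at every vertex of $V(H)$: at a vertex of $V(G)\setminus\{i,j\}$ because the newly added edges carry no flow, and at a vertex of $V(H)\setminus V(G)$ because all of its incident edges carry no flow. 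Thus $f^H$ is a unit $i$-$j$ flow on $H$ with $\sum_{e\in E(H)}\ell_e(f^H_e)^2=\sum_{e\in E(G)}\ell_e(f^G_e)^2=\gw^G_{i,j}$, so applying the variational formula on $H$ gives $\gw^H_{i,j}\le\gw^G_{i,j}$.

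The main obstacle will be establishing Thomson's principle itself --- in particular the orthogonal splitting of a flow into a potential (gradient) component and a circulation, which is the step where the optimality of the electrical flow is actually used; everything afterward is bookkeeping, and the weighted/multigraph generality costs nothing. If one prefers to avoid the flow formulation, essentially the same argument runs through the dual Dirichlet principle $\gw_{i,j}\inv=\min\lcr{\sum_{uv}(\psi_u-\psi_v)^2/\ell_{uv}:\psi_i=1,\ \psi_j=0}$: restrict a minimizing potential on $H$ to $V(G)$ and discard the nonnegative energy contributed by the edges in $E(H)\setminus E(G)$.
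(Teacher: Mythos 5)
Your proof is correct. The paper does not actually prove this proposition --- it is stated with only a citation to Doyle--Snell --- and your argument (Thomson's principle, with the minimizing unit flow on $G$ extended by zero to $H$) is precisely the standard proof found in that reference; the derivation of the variational formula from \eqref{eq:eff res} and the zero-extension step are both sound, and the Dirichlet-principle alternative you sketch also goes through.
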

It immediately follows that:
\begin{proposition}[Curvature monotonicity]\label{prop:k2}
    Consider a subgraph $G\subset H$. For any vertex $i\in V(G)$ with $\deg_Gi=\deg_Hi$, we have that $p^G_i\le p^H_i$. 
\end{proposition}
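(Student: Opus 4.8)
The plan is to derive this immediately from Rayleigh's monotonicity law, which is stated just above. First I would unwind what the degree hypothesis gives us: since $G\subset H$ we have $V(G)\subseteq V(H)$ and $E(G)\subseteq E(H)$, so the neighbors of $i$ in $G$ form a subset of its neighbors in $H$; the assumption $\deg_G i=\deg_H i$ then forces these two neighbor sets to coincide. Call this common set $N(i)$. Consequently the defining formulas
$$p^G_i=1-\half\sum_{y\in N(i)}\gw^G_{i,y},\qquad p^H_i=1-\half\sum_{y\in N(i)}\gw^H_{i,y}$$
are sums over the \emph{same} index set, which is the crucial point.

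Next I would apply Rayleigh's monotonicity law termwise: for each $y\in N(i)$ we have $\gw^G_{i,y}\ge\gw^H_{i,y}$. Summing over $y\in N(i)$ and multiplying by $-\half$ reverses the inequality, yielding $p^G_i\le p^H_i$, as claimed.

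There is essentially no obstacle here — the content is entirely carried by Rayleigh's monotonicity law, and the proof is a one-line consequence of it. The one point worth flagging (and the reason the degree hypothesis is not cosmetic) is that if $i$ had strictly smaller degree in $G$, then $p^G_i$ would be a sum over fewer resistance terms, each individually larger; these two effects pull in opposite directions, so no comparison between $p^G_i$ and $p^H_i$ would hold in general. The equal-degree assumption removes the first effect, leaving only the termwise resistance comparison, which is exactly what Rayleigh supplies.
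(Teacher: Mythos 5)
Your proof is correct and matches the paper's reasoning: the paper simply notes that the proposition ``immediately follows'' from Rayleigh's monotonicity law, and your write-up is the natural fleshing-out of that one-line deduction, correctly identifying that the equal-degree hypothesis forces the two neighbor sets to coincide so that Rayleigh's inequality can be applied termwise.
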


We also use the following classical circuitry result, where $\Z$ implicitly is the graph with $V=\Z$ and edges connecting consecutive integers: 
\begin{proposition}
    The infinite grid $\Z\square\Z$ has 0 node curvature everywhere.
\end{proposition}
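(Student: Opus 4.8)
The plan is to reduce the statement to the single classical fact that the effective resistance across an edge of $\Z\square\Z$ equals $\frac12$, after which the curvature formula does the rest. First, $\Z\square\Z$ is vertex- and edge-transitive: its automorphism group contains all translations by $\Z^2$ together with the dihedral group $D_4$ of symmetries of the unit square fixing a chosen origin, and these act transitively on directed edges — a translation carries any edge to one incident to the origin, and a $90^\circ$ rotation interchanges horizontal and vertical edges. Hence there is a single number $r:=\gw_{x,y}$ valid for all adjacent $x\sim y$, and since every vertex has degree $4$, the definition of node curvature gives
\[
  p_x \;=\; 1-\frac12\sum_{y\sim x}\gw_{x,y} \;=\; 1-\frac12\cdot 4r \;=\; 1-2r
\]
for every vertex $x$, so it suffices to prove $r=\frac12$.

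To evaluate $r$, fix an edge $ov$ and argue by superposition. Because simple random walk on $\Z^2$ is recurrent, effective resistances between vertices of $\Z\square\Z$ are finite and well-defined, equal to the limit of the effective resistances in any exhausting sequence of finite subgraphs (see \cite{DS}). Driving a unit current into $o$ with the return at infinity produces, by the $D_4$-symmetry about $o$, a flow of $\frac14$ out of $o$ along each of its four incident edges, in particular $\frac14$ from $o$ toward $v$ along $ov$; symmetrically, driving a unit current out at $v$ with the source at infinity sends $\frac14$ from $o$ toward $v$ along $ov$ as well. Superposing, one obtains a unit current from $o$ to $v$ carrying $\frac14+\frac14=\frac12$ along the edge $ov$; as that edge has unit resistance, Ohm's law gives a potential difference of $\frac12$ between $o$ and $v$, and since this potential difference is exactly $\gw_{o,v}$ we conclude $r=\frac12$, whence $p_x=1-2\cdot\frac12=0$ at every vertex.

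The only delicate point — and the step I expect to require the most care — is the passage to the infinite network: that the unit current to infinity exists, is unique, has finite energy, and that superposition and the identity ``potential drop $=$ effective resistance'' persist in this limit. All of this is standard and rests on the recurrence of $\Z^2$, so I would simply cite it. Alternatively, one can run the whole computation on the finite discrete tori $C_n\square C_n$: these are vertex- and edge-transitive, so by \exref{ex:vtxtransitive} they have curvature $\frac1{n^2}$ at every vertex, which (again using degree $4$ and edge-transitivity) forces $\gw^{C_n\square C_n}_{o,v}=\frac12\bigl(1-\frac1{n^2}\bigr)$; letting $n\to\infty$ and invoking convergence of local effective resistances recovers $r=\frac12$ and hence $\bp\equiv 0$.
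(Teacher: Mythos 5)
Your proposal is correct and follows essentially the same route as the paper, which simply observes that the claim is immediate once one knows each edge of $\Z\square\Z$ has effective resistance $\frac12$ (a folklore fact the paper cites rather than proves). You reduce to the same fact via vertex- and edge-transitivity and then additionally sketch the standard superposition/symmetry argument for $\gw_{o,v}=\frac12$, which is precisely the cited folklore proof.
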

This follows immediately from the effective resistance across an edge being exactly $\half$, a folklore result; one proof is given in \cite{Mungan}. 

\section{Products of paths}

We consider Cartesian products of path graphs. Call a graph of the form $P_2\square P_n$ a {\em ladder}, and a graph of the form $P_{n_1}\square\cdots\square P_{n_d}$ with $n_1,\dots,n_d\ge3$ a {\em wide} path product or grid. We fully analyze the resistance curvature of such graphs only after first studying larger two-dimensional grids, and then settle a natural question about wide higher-dimensional grids. 

In a grid $P_{n_1} \square \cdots \square P_{n_d}$, we say a vertex is in the {\em boundary} if its projection to some factor $P_{n_j}$ is an endpoint, and in the {\em interior} otherwise.

\subsection{Wide grids and wide path products}\label{sec:wide}

In this section, we will use the Mathematica commands \texttt{GridBoundaryNodeCurvatures} and \texttt{AllNodeCurvaturesInProduct}, defined in \secref{sec:code}. We prove \thmref{thm:grid curv}, which we recall shows, among other things, the boundary vertices of grid graph $P_m \square P_n$ have nonnegative node curvature when $m, n\ge 3$. 

\begin{proof}[{Proof of \thmref{thm:grid curv}}]
    We prove the result about interior vertices first. Take any interior vertex $i$, and consider the inputs $G=P_m\square P_n$ and $H=\Z\square\Z$ to \propref{prop:k2}, where we embed $G$ into $H$ in any way; say, identify $V(G) = \lcr{i\otimes j:1\le i\le m,1\le j\le n}$. The claim about interior vertex curvature follows. 

    Write now $H=P_m\square P_n$. For the boundary vertices, again model $V(H)$ as $V(G)$ was just before. Up to symmetry (reflecting and rotation, possibly swapping the role of $m$ and $n$), assume that a given boundary vertex $(i,j)$ has $i=1$ and $j\le\frac{n}{2}$. If $j=1$ then let put $G=P_3\square P_3$ modeled as $V(G)=\lcr{i\otimes j:1\le i,j\le 3}\subseteq V(H)$. (Think of placing the $3\times3$ box inside $H$ along the edge, sliding it until it contains $(i,j)$; see \figref{fig:inset}.) 
    \begin{figure}
        \centering
        \includegraphics[width=0.5\textwidth]{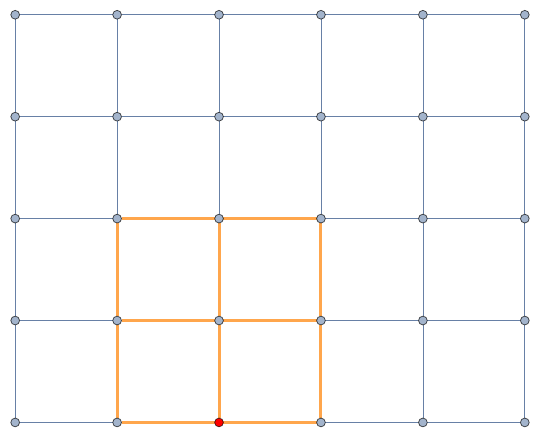}
        \caption{The $3\times3$ box (orange) is slid until it contains the target vertex (red).}
        \label{fig:inset}
    \end{figure}Clearly we may again apply \propref{prop:k2}, with inputs $i\otimes j$, $G$, and $H$, to find
    $$p^G_{i\otimes j}\le p^H_{i\otimes j}.$$
    However $G$ is a sufficiently small graph that we may compute with it directly, and we find that each node curvature is at least 0 (see \figref{fig:33}), i.e.\ $p^G_{i\otimes j}\ge0$.
    \begin{figure}
        \centering
        \includegraphics[width=0.5\textwidth]{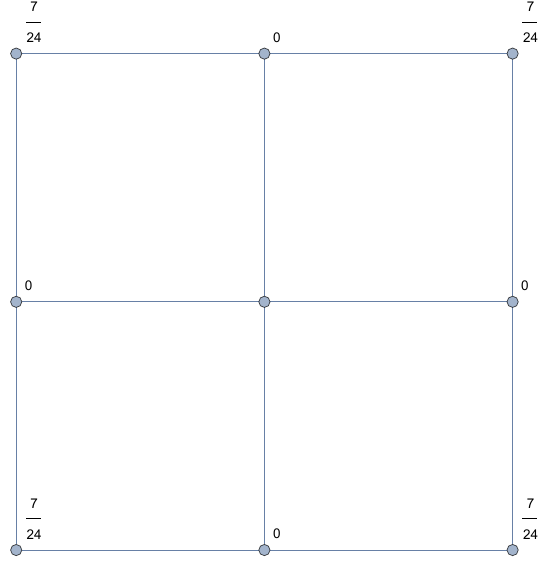}
        \caption{Boundary curvatures for the $3\times3$ grid. Produced with the Mathematica command \texttt{GridGraph[\string{3, 3\string}, VertexLabels -> GridBoundaryNodeCurvatures[3, 3]]}.}
        \label{fig:33}
    \end{figure}
    
    Similarly, if $H$ is large enough to allow a $3\times4$ path to be embedded, then doing the same computation with $P_3\square P_4$ gives \figref{fig:34} and in this setting that $p^G_{i\otimes j}\ge\frac{17}{4830}$. 
    \begin{figure}
        \centering
        \includegraphics[width=0.5\textwidth]{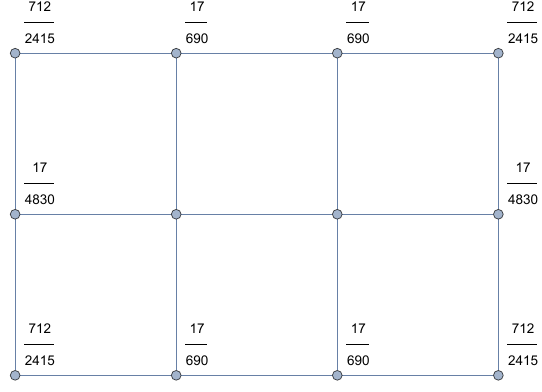}
        \caption{Boundary curvatures for the $3\times4$ grid. Produced with the Mathematica command \texttt{GridGraph[\string{3, 4\string}, VertexLabels -> GridBoundaryNodeCurvatures[3, 4]]}.}
        \label{fig:34}
    \end{figure}
\end{proof}

It is natural to wonder next whether this behavior persists into higher-dimensional grids. It actually happens that this is unique to dimension 2, as witnessed by the smallest possible instance. In three dimensions, we let an {\em interior} vertex be one with degree exactly 6. (In general, it only makes sense to call the interior vertex in a $d$-fold product of paths to be one of degree exactly $2d$.)

\begin{proposition}
    The graph $P_3\square P_3\square P_3$ has boundary vertices with negative node curvature. 
\end{proposition}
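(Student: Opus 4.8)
This is a finite computation, and the plan is to carry it out for $G=P_3\square P_3\square P_3$ (a $27$-vertex graph), either by hand via the product spectral decomposition recalled in \secref{sec:bg} or directly with the command \texttt{AllNodeCurvaturesInProduct}.

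First I would cut down the work using symmetry. The automorphism group of $G$ has exactly four vertex orbits: the $8$ corners (degree $3$), the $12$ edge-midpoints (degree $4$), the $6$ face-centers (degree $5$), and the single body-center (degree $6$); since $\bp$ is constant on each orbit, only four numbers need to be found. It is worth noting up front that \propref{prop:k2}, applied to the embedding $G\subset\Z\square\Z\square\Z$ (which has $0$ curvature everywhere), already yields $p\le 0$ at the body-center, but it gives no information about the three boundary orbits; so a genuine computation is unavoidable to reach the boundary, and this is exactly where the $2$-dimensional argument of \thmref{thm:grid curv} (where the $3\times3$ and $3\times4$ boxes were enough) has no analogue.

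Next I would compute the needed effective resistances from the spectrum. The Laplacian of $P_3$ has eigenvalues $0,1,3$ with orthonormal eigenvectors $\tfrac{1}{\sqrt3}(1,1,1)$, $\tfrac{1}{\sqrt2}(1,0,-1)$, $\tfrac{1}{\sqrt6}(1,-2,1)$, so by the Kronecker-sum description in \secref{sec:bg} the $27$ eigenpairs of $G$ are completely explicit (the eigenvalues being the sums $\mu_a+\mu_b+\mu_c$ with each $\mu\in\{0,1,3\}$), whence $L^+$ and then $\gw_{x,y}=L^+_{x,x}+L^+_{y,y}-2L^+_{x,y}$ for each orbit representative and its neighbors. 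Because each representative has at most two inequivalent incident edges, only a handful of resistances are needed. Plugging these into $p_x=1-\half\sum_{y\sim x}\gw_{x,y}$, the four curvatures come out to be $\tfrac{491}{2520}$ (corner), $\tfrac{19}{1512}$ (edge-midpoint), $-\tfrac{88}{945}$ (face-center), and $-\tfrac{19}{126}$ (body-center); as a sanity check these satisfy $8\cdot\tfrac{491}{2520}+12\cdot\tfrac{19}{1512}+6\cdot(-\tfrac{88}{945})+(-\tfrac{19}{126})=1$, as forced by the trace identity $\sum_x p_x=1$. In particular the face-center vertices are boundary vertices with negative node curvature, which is the claim.

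There is no serious conceptual obstacle here; the only real work is organizing the finitely many pseudoinverse entries, which is precisely what the provided code automates, so I expect the write-up to be short. The one point I would emphasize is that the failure is \emph{narrow} and purely quantitative: the edge-midpoint curvature is still positive (barely, $\tfrac{19}{1512}\approx0.013$) and the corner curvature is comfortably positive, so only one of the three boundary orbits dips below zero. Thus the nonnegativity-on-the-boundary phenomenon of \thmref{thm:grid curv} is genuinely special to two dimensions, and \conjref{conj:path-product} does not extend verbatim to higher-dimensional grids.
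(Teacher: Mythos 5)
Your proposal is correct --- I verified your four orbit values independently from the spectral decomposition (e.g.\ $\gw$ from the body center to a face center is $\tfrac{145}{378}$, giving $p=-\tfrac{19}{126}$ there, and the face-center curvature is indeed $-\tfrac{88}{945}$) --- and it takes essentially the same approach as the paper, namely a direct finite computation of the curvature vector of $P_3\square P_3\square P_3$. The only cosmetic difference is that the paper shortcuts the bookkeeping by noting that the output contains more than one negative entry while the graph has only one interior vertex, whereas you organize the computation by the four vertex orbits and exhibit the face-centers explicitly as the negatively curved boundary vertices.
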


This can be seen from a Mathematica command\footnote{\texttt{AllNodeCurvaturesInProduct[Normal[KirchhoffMatrix[PathGraph[Range[3]]]], 3]}} where since there is a unique interior vertex (the one corresponding to the middle of each of the constituent paths) we are done once we recognize more than one negative number in the output. 

\subsection{Ladder graphs}\label{sec:ladder}

\begin{figure}
    \centering
    \begin{subfigure}[b]{0.3\textwidth}
        \centering
        \includegraphics[width=\textwidth]{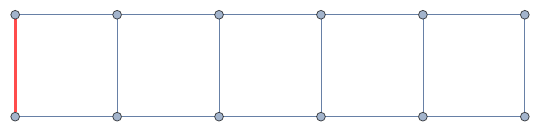}
        \caption{A rung at the top/bottom.}
        \label{fig:end rung}
    \end{subfigure}
    \begin{subfigure}[b]{0.3\textwidth}
        \centering
        \includegraphics[width=\textwidth]{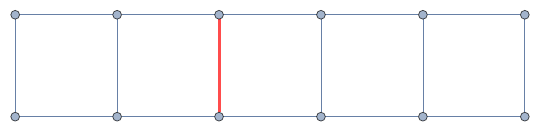}
        \caption{A rung in the middle.}
        \label{fig:mid rung}
    \end{subfigure}
    \begin{subfigure}[b]{0.3\textwidth}
        \centering
        \includegraphics[width=\textwidth]{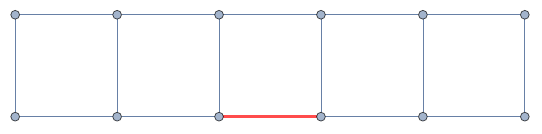}
        \caption{A rail in the middle.}
        \label{fig:mid rail}
    \end{subfigure}
    \caption{Examples of rungs and rails in ladders, highlighted in red.}
    \label{fig:rung rail pics}
\end{figure}

We will study node curvature of the ladder graph $G^{(n)}=P_2\square P_n$. Consider the vertex set to be $\{1,2\}\otimes\{1,\dots,n\}$. Call the edge $e^{(n)}_k$ connecting $1\otimes k$ and $2\otimes k$ the {\em $k$th rung}, and the edge $e^{(n)}_{i,k}$ connecting $i\otimes k$ to $i\otimes(k+1)$, for $i\in\{1,2\}$ and $1\le k<n$, the {\em ($i,k$)th rail}. See Figures \ref{fig:mid rung} and \ref{fig:mid rail} for examples. We prove the following about node curvature in ladders: 

\begin{proposition}\label{prop:ladder}
    For $1<k<n$, $\bp^{(n)}_{i\otimes k}<0$, 
    and both $\bp^{(n)}_{i\otimes1}$ and $\bp^{(n)}_{i\otimes n}$ increase monotonically and converge to $2 - \sqrt{3}$ as $n \to \infty$.
\end{proposition}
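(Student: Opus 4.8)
The goal is to prove \propref{prop:ladder}, which has two parts: (i) interior (non-corner) vertices of $P_2 \square P_n$ have strictly negative node curvature for $1 < k < n$, and (ii) the corner curvatures $\bp^{(n)}_{i\otimes 1}$ (equivalently $\bp^{(n)}_{i\otimes n}$ by symmetry) increase monotonically in $n$ and converge to $2 - \sqrt{3}$.

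\medskip

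My plan is to get everything from explicit effective resistance computations on the ladder, exploiting the mirror symmetry swapping the two rails. First I would set up the reduction: because $P_2 \square P_n$ is invariant under the involution $1 \otimes k \leftrightarrow 2 \otimes k$, for any vertex $v = i \otimes k$ the relevant resistances $\gw_{v,w}$ to its (at most three) neighbors can be computed by folding the ladder along its long axis. Concretely, $\gw_{1\otimes k, 2\otimes k}$ (the rung resistance) and $\gw_{1\otimes k, 1\otimes(k\pm1)}$ (the rail resistances) can each be reduced to a resistance in a smaller weighted path-like network by the series/parallel laws: the two symmetric halves hanging off either side of a rung combine in parallel, and a standard continued-fraction recursion computes the resistance "seen" looking down the ladder from rung $k$. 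So I would introduce, for each $k$, the quantity $r_k$ = effective resistance between the two endpoints of rung $k$ when only the sub-ladder on one side (say rungs $k, k+1, \dots, n$) is present, and derive the recursion $r_k$ as a function of $r_{k+1}$ coming from adding one more "square" (a unit rung in series-parallel with two unit rails). This recursion is a Möbius map in $r_{k+1}$, and iterating it from the boundary condition $r_n = 1$ (the last rung alone) gives a closed form; its fixed point is the relevant solution of the quadratic that produces $2-\sqrt 3$.

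\medskip

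For part (ii), the corner vertex $1 \otimes 1$ has exactly two neighbors, $2 \otimes 1$ (across rung 1) and $1 \otimes 2$ (along a rail), so $\bp^{(n)}_{1\otimes 1} = 1 - \tfrac12(\gw_{1\otimes1,2\otimes1} + \gw_{1\otimes1,1\otimes2})$. Both of these resistances are computable from the $r_k$'s: $\gw_{1\otimes 1, 2\otimes 1} = r_1$, and $\gw_{1\otimes1,1\otimes2}$ is a short parallel/series combination involving $r_2$ and the unit rail. Monotonicity in $n$ should then follow from Rayleigh monotonicity (\propref{prop:k2} is the curvature version, but I want the resistance version directly): adding another rung-and-rails block at the far end is passing to a supergraph while keeping the corner's degree fixed, so each of the two corner resistances is nonincreasing in $n$, hence $\bp^{(n)}_{1\otimes 1}$ is nondecreasing; strict monotonicity needs checking that the relevant resistance strictly decreases, which is clear since a genuinely new conducting path appears. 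The limit is obtained by letting $r_k \to r_\infty$, the attracting fixed point of the Möbius recursion, and plugging into the two resistance formulas and then into $\bp = 1 - \tfrac12(\cdots)$; the arithmetic should collapse to $2 - \sqrt 3$.

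\medskip

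For part (i), for an interior rail vertex $1 \otimes k$ with $1 < k < n$ the three neighbors are $2 \otimes k$, $1 \otimes (k-1)$, $1\otimes(k+1)$, and I would show $\tfrac12(\gw_{1\otimes k,2\otimes k} + \gw_{1\otimes k,1\otimes(k-1)} + \gw_{1\otimes k,1\otimes(k+1)}) > 1$. Two routes are available: (a) a direct computation using the left-side and right-side reduced resistances $r_k^{L}, r_k^{R}$ meeting at rung $k$, writing all three resistances as explicit rational functions of $r_k^L, r_k^R$ and verifying the inequality over the range these can take (each lies in a bounded interval determined by the recursion, between $1$ and $r_\infty$); or (b) a monotonicity-plus-base-case argument: by Rayleigh monotonicity the curvature at an interior vertex of $P_2 \square P_n$ is at most its curvature in the bi-infinite ladder $P_2 \square \Z$, and one checks (again by the continued-fraction computation, now two-sided and symmetric, giving $r = r_\infty$ on both sides) that even in $P_2 \square \Z$ the value is strictly negative. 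I expect route (b) to be cleanest: it isolates a single explicit computation in the limiting object, and the hard part of the whole proposition is really just carrying out that one continued-fraction evaluation carefully and identifying $r_\infty = $ (the appropriate root giving $2 - \sqrt 3$ for the corner) — the rest is bookkeeping with series/parallel laws and invoking \propref{prop:k2} and Rayleigh monotonicity. The main obstacle, then, is not conceptual but computational: correctly setting up the Möbius recursion for $r_k$ with the right boundary data and extracting clean closed forms, and making sure the "strict" in strict monotonicity and in negativity is justified rather than just the weak inequalities that Rayleigh gives for free.
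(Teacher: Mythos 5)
Your setup matches the paper's almost exactly: the quantity you call $r_k$ is the paper's $\ga_m$ (resistance across the end rung of a one-sided sub-ladder with $m$ rungs), the M\"obius recursion is $\ga_{m+1}=\frac{\ga_m+2}{\ga_m+3}$ with $\ga_1=1$ and attracting fixed point $\sqrt3-1$, and the corner computation does collapse to $\bp^{(n)}_{i\otimes1}=1-\ga_n\uparrow 2-\sqrt3$ (both corner resistances turn out to equal $\ga_n$). Your route (a) for the interior vertices --- writing the rung and two rail resistances as rational functions of the left and right reduced resistances $\ga_{k-1},\ga_{n-k}$ and checking the sign on the interval $(\sqrt3-1,1]$ --- is precisely the paper's argument: the curvature simplifies to $-\half\frac{(\ga_{k-1}+1)(\ga_{n-k}+1)-3}{(\ga_{k-1}+3)(\ga_{n-k}+3)-1}$, which is negative because each factor $\ga+1$ exceeds $\sqrt3$.

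However, the route you say you prefer, (b), has a genuine error: the node curvature of the bi-infinite ladder $P_2\sq\Z$ is exactly $0$, not strictly negative. Plugging the fixed point into your own formulas gives rung resistance $\frac{1}{1+\frac{2}{\sqrt3+1}}=\frac{1}{\sqrt3}$ and rail resistance $\frac{1}{1+\frac{1}{2\sqrt3-1}}=\frac{2\sqrt3-1}{2\sqrt3}$, so the sum over the three neighbors is $\frac{1}{\sqrt3}+\frac{2\sqrt3-1}{\sqrt3}=2$ and the curvature is $1-\half\cdot2=0$. Hence the Rayleigh comparison to $P_2\sq\Z$ only yields $\bp^{(n)}_{i\otimes k}\le0$; to get strictness this way you would additionally need a \emph{strict} form of Rayleigh's law (the deleted edges carry nonzero current), which is an extra argument you have not supplied. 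So you should carry out route (a) after all --- the finite two-sided computation with $\ga_{k-1},\ga_{n-k}>\sqrt3-1$ gives strict negativity for free, since the fixed point is approached but never attained. The remaining pieces (strict monotonicity of the corner curvature, the limit $2-\sqrt3$) are fine either via your Rayleigh argument with a strictness check or, as in the paper, directly from the strict decrease of the iterates $\ga_n$.
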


We do this by computing the effective resistance across rungs and rails (Lemmas \ref{lem:end rung res}, \ref{lem:gen rung res}, and \ref{lem:gen rail res}). 
\begin{lemma}\label{lem:end rung res}
    The effective resistance $\ga_n=\gw^{(n)}_{e^{(n)}_1}$ has $\ga_n\downarrow\sqrt{3}-1$ and obeys the recurrence
    \begin{equation}
        \ga_{n+1}=\frac{\ga_n+2}{\ga_n+3}.\label{eq:rung rec}
    \end{equation}
\end{lemma}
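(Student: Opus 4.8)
The plan is to compute $\ga_n = \gw^{(n)}_{e^{(n)}_1}$, the effective resistance across the top rung of the ladder $P_2 \square P_n$, by a standard inductive network-reduction argument, then analyze the resulting recurrence. The key observation is that the ladder $P_2 \square P_{n+1}$ is obtained from $P_2 \square P_n$ by attaching one new rung and two new rails. Concretely, if I orient the ladder so that the $1$st rung is at the ``near'' end, then the rest of the ladder, viewed from the two endpoints $1 \otimes 1$ and $2 \otimes 1$, behaves as a single effective resistor; call its resistance $\ga_n$. To go from $n$ to $n+1$, I prepend a new rung (of resistance $1$, in parallel with the old network of resistance $\ga_n$) and then add the two new rails (each of resistance $1$, in series) connecting the new endpoints to the old ones.

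First I would set up the reduction carefully. Looking at $P_2 \square P_{n+1}$ from the new endpoints $1\otimes 1$ and $2 \otimes 1$: these connect via two rails (total series resistance $1 + 1 = 2$ through the path that goes up one rail, across, and down the other) to the sub-ladder on vertices indexed $2, \dots, n+1$, which is a copy of $P_2 \square P_n$ presenting resistance $\ga_n$ across its end rung. But I must be careful: the two rails are not simply in series, since the far ends attach to a two-terminal network. The correct reduction is: the new rung (resistance $1$) is in parallel with the series combination of [rail, then $\ga_n$-network, then rail] $= 1 + \ga_n + 1 = \ga_n + 2$. Wait — that over-counts, because the $\ga_n$-network already sits between the two old endpoints, so the path through both rails and the old network has resistance $1 + \ga_n + 1$? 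No: each rail has resistance $1$, and there are two of them, one on each side, in series with the $\ga_n$ block, giving $2 + \ga_n$ — but then this whole branch is in parallel with the single new rung of resistance $1$. Hence
\begin{equation}
    \ga_{n+1} = \frac{1 \cdot (\ga_n + 2)}{1 + (\ga_n + 2)} = \frac{\ga_n + 2}{\ga_n + 3},
\end{equation}
which is exactly \eqref{eq:rung rec}. I would double-check the base case: $G^{(2)} = P_2 \square P_2 = C_4$, whose end rung sees resistance $\ga_2 = \tfrac{1\cdot 3}{1+3} = \tfrac34$ (one edge in parallel with a path of three edges), and confirm this is consistent with iterating from, say, $\ga_1$ interpreted as a single edge $P_2$ with $\ga_1 = 1$, giving $\ga_2 = 3/4$. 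Good.

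Next I would analyze the recurrence $f(x) = \frac{x+2}{x+3}$. Its fixed points solve $x^2 + 3x = x + 2$, i.e. $x^2 + 2x - 2 = 0$, so $x = -1 + \sqrt 3$ (the positive root) and $x = -1 - \sqrt 3$. Since $f$ is a Möbius map, monotone increasing on $(-3, \infty)$ with $f'(x) = \frac{1}{(x+3)^2} < 1$ there, the positive fixed point $\sqrt 3 - 1$ is attracting. To get the monotone decreasing convergence $\ga_n \downarrow \sqrt 3 - 1$ claimed in the lemma, I would check that $\ga_2 = 3/4 > \sqrt 3 - 1 \approx 0.732$ and that $f$ maps $(\sqrt 3 - 1, \infty)$ into itself with $f(x) < x$ there (equivalently $x^2 + 2x - 2 > 0$, which holds for $x > \sqrt 3 - 1$); hence the sequence is decreasing and bounded below by the fixed point, so it converges, and the limit must be the fixed point $\sqrt 3 - 1$.

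I do not expect a serious obstacle here — the only delicate point is getting the network reduction exactly right, in particular not mis-combining the two rails and the pseudo-resistor $\ga_n$, and being precise about what ``resistance of the rest of the ladder'' means as a two-terminal reduction (justified by the series and parallel laws together with the fact that everything downstream of the $k$th rung is, by induction, equivalent to a single edge of resistance $\ga_n$ between $1\otimes(k{+}1)$ and $2\otimes(k{+}1)$ for the purpose of resistance computations at the near end). Once the recurrence \eqref{eq:rung rec} is established, the convergence statement is a routine fixed-point analysis of a contractive Möbius transformation, and monotonicity follows from the sign of $x^2 + 2x - 2$ on the relevant interval.
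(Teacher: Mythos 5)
Your proof is correct and follows essentially the same route as the paper: reduce the sub-ladder beyond the first rung to a single effective resistor of value $\ga_n$, combine the two new rails in series with it to get a branch of resistance $\ga_n+2$ in parallel with the new rung, and then deduce convergence to $\sqrt{3}-1$ from the fact that the iterates of $x\mapsto\frac{x+2}{x+3}$ starting at $\ga_1=1$ form a decreasing sequence bounded below by the fixed point. Your fixed-point analysis is somewhat more explicit than the paper's (which compresses it to one sentence), but there is no substantive difference.
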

\begin{proof}
    (See \figref{fig:end rung}.) The resistance across this rung in $G^{(n+1)}$ is the same as the resistance across two nodes with 1 and $\ga_n+2$ resistors, so by the parallel law, 
    $$\ga_{n+1}=\frac{1}{1+\frac{1}{\ga_n+2}}$$
    so we recover \eqref{eq:rung rec}. Then, we observe that $\ga_1=1$ and the map $f:x\longmapsto\frac{x+2}{x+3}$ decreases for $x>\sqrt{3}-1$, so that its iterates $\lpr{f^{(n)}(1)}_{n\in\N}$ form a decreasing bounded sequence; thus it has a limit, and by continuity $\sqrt{3}-1$ can be checked to be the only possible limit. 
\end{proof}
\begin{lemma}\label{lem:gen rung res}
    The effective resistance across the $k$th rung equals
    $$\frac{1}{1+\frac{1}{\ga_{k-1}+2}+\frac{1}{\ga_{n-k}+2}}.$$
\end{lemma}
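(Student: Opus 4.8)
The plan is to reduce the computation of the effective resistance across the $k$th rung $e^{(n)}_k$ to a parallel combination of three paths, exactly in the spirit of the proof of Lemma \ref{lem:end rung res}. Fix the rung connecting $1\otimes k$ and $2\otimes k$. Removing this rung's two endpoints... more precisely, I would view the ladder $G^{(n)}=P_2\square P_n$ as the union of three pieces that meet only at the two vertices $1\otimes k$ and $2\otimes k$: first, the rung $e^{(n)}_k$ itself (a single unit resistor); second, the ``left'' sub-ladder on columns $1,\dots,k$, which is a copy of $G^{(k)}$ with the $k$th rung deleted; third, the ``right'' sub-ladder on columns $k,\dots,n$, which is a copy of $G^{(n-k+1)}$ with its first rung deleted. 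Since these three pieces share only the pair $\{1\otimes k, 2\otimes k\}$, the effective resistance across the rung is the parallel combination of the three individual effective resistances across that pair.

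The first piece contributes resistance $1$. For the second piece — the left sub-ladder $G^{(k)}$ with the $k$th (end) rung removed — the effective resistance between $1\otimes k$ and $2\otimes k$ along what remains is, by the series law, the resistance of the $(k-1)$th rail-path up one side plus the resistance across the end rung of $G^{(k-1)}$ plus the rail-path down the other side; equivalently, it is $1 + \ga_{k-1} + 1 = \ga_{k-1}+2$. Here I am using that the only route in the deleted-end-rung ladder from $1\otimes k$ to $2\otimes k$ passes through column $k-1$, and that the network hanging off column $k-1$ is precisely the graph $G^{(k-1)}$ whose end-rung resistance is $\ga_{k-1}$ by definition — this is the same ``collapse the rest of the ladder into an equivalent resistor of value $\ga_n+2$'' move used in Lemma \ref{lem:end rung res}. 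Symmetrically, the third piece contributes $\ga_{n-k}+2$, since the right sub-ladder with its first rung removed is a copy of $G^{(n-k+1)}$ with an end rung deleted, and the dangling network off its adjacent column is $G^{(n-k)}$ with end-rung resistance $\ga_{n-k}$. Applying the parallel law to these three values gives
\[
\gw^{(n)}_{e^{(n)}_k}=\frac{1}{1+\frac{1}{\ga_{k-1}+2}+\frac{1}{\ga_{n-k}+2}},
\]
as claimed.

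The one step that needs care — and that I expect to be the main (minor) obstacle — is justifying rigorously that the three pieces interact only through their two common vertices, so that the parallel law applies cleanly. Concretely, one must check that after deleting the $k$th rung, the ladder $G^{(n)}$ decomposes as a ``2-sum'' (gluing along a vertex pair) of the left and right sub-ladders, and that replacing the ``rest of the ladder seen from column $k-1$'' by a single resistor of value $\ga_{k-1}$ is legitimate — this is a standard Schur-complement / star-mesh-free argument: a subnetwork attached to the ambient network at exactly two terminals can be replaced by its two-terminal equivalent resistance without affecting any measurement made outside it, and here that subnetwork off column $k-1$ on the left (resp. column $k+1$ on the right) is literally an isomorphic copy of $G^{(k-1)}$ (resp. $G^{(n-k)}$) with the connection made at its own end rung's endpoints. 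One should also note the boundary/degenerate cases: when $k-1=0$ or $n-k=0$ the corresponding term is absent (the rung is an end rung), and the formula degenerates correctly to $\ga$ of the appropriate size; matching this against Lemma \ref{lem:end rung res} with the convention $\ga_0 = \infty$ (an empty rail-path path gives infinite resistance, i.e. the term $\tfrac{1}{\ga_{0}+2}$ vanishes) is a quick sanity check worth including.
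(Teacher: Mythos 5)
Your proposal is correct and follows essentially the same route as the paper: the paper's proof likewise observes that the rung is in parallel with a left branch of equivalent resistance $\ga_{k-1}+2$ and a right branch of equivalent resistance $\ga_{n-k}+2$, and then applies the parallel law. You simply spell out in more detail the series-law and two-terminal-substitution steps that the paper leaves implicit.
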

\begin{proof}
    (See \figref{fig:mid rung}.) The resistance across this rung in $G^{(n)}$ is the same as the resistance across two nodes with 1, $\ga_{k-1}+2$, and $\ga_{n-k}+2$ resistors, so by the parallel law the result follows. 
\end{proof}
\begin{lemma}\label{lem:gen rail res}
    The effective resistance across the ($i,1$)th rail equals $\ga_n$. For $1<k<n$, the effective resistance across the ($i,k$)th rail equals
    $$\frac{1}{1+\frac{1}{\ga_k+\ga_{n-k}+1}}.$$
\end{lemma}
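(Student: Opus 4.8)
The plan is to compute the effective resistance across a rail by the same two-step strategy used for \lemref{lem:gen rung res}: first reduce the ladder to the left and to the right of the rail in question to a single equivalent resistor using the series and parallel laws, and then combine everything across the two endpoints of the rail. For the $(i,1)$th rail, the edge connecting $i\otimes1$ to $i\otimes2$ sits in parallel with the path that goes through the first rung, along the other rail's first edge, and through the second rung — but a cleaner way to see it is: deleting the rail $e^{(n)}_{i,1}$ from $G^{(n)}$ and asking for the resistance between its endpoints is the same as asking for the resistance across the first rung $e^{(n-1)}_1$ of a ladder on $n-1$ rungs (collapse $i\otimes1$ and $i\otimes 2$'s roles appropriately), which by \lemref{lem:end rung res} equals $\ga_{n}$ — wait, one must be careful with indexing. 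Let me instead argue directly: the two endpoints $i\otimes 1$ and $i\otimes 2$ are joined by (a) the direct rail edge of resistance $1$, and (b) everything else in the graph. Since $i\otimes 1$ has degree $2$ in $G^{(n)}$, with its only other neighbor being the opposite vertex $(3-i)\otimes 1$ across the first rung, the ``everything else'' path from $i\otimes 1$ must first traverse that rung; so the parallel combination is between $1$ and $1 + R'$, where $R'$ is the resistance between $(3-i)\otimes1$ and $i\otimes 2$ in $G^{(n)}\setminus\{i\otimes1\}$. But $G^{(n)}\setminus\{i\otimes1\}$ with the rung restored is exactly the ladder structure needed, and unwinding this identifies the whole parallel combination with $\ga_n$ directly; I would present this as a short picture-driven argument mirroring \figref{fig:end rung}.

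For the interior rail with $1<k<n$, the strategy is: the two endpoints $i\otimes k$ and $i\otimes(k+1)$ are joined by the direct rail edge of resistance $1$ in parallel with one other effective path. To find that path's resistance, contract the portion of the ladder to the left of column $k$ down to a single resistor between $1\otimes k$ and $2\otimes k$, and the portion to the right of column $k+1$ down to a single resistor between $1\otimes(k+1)$ and $2\otimes(k+1)$. By \lemref{lem:end rung res} (applied to the sub-ladder on columns $1,\dots,k$, reading from the $k$th rung back to the first) the left part contributes resistance $\ga_k$ across the $k$th rung, and similarly the right part contributes $\ga_{n-k}$ across the $(k+1)$th rung; reading from column $k$ to $k+1$ one then has the opposite rail of resistance $1$ in series with these, giving a non-direct path of resistance $\ga_k + 1 + \ga_{n-k}$. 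Parallel-combining with the direct rail of resistance $1$ yields $\dfrac{1}{1+\frac{1}{\ga_k+\ga_{n-k}+1}}$, as claimed.

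The main obstacle is getting the reductions to the left and right of the rail exactly right, since the sub-ladder ``to the left of column $k$'' is a ladder on $k$ columns and one must confirm that its resistance across the $k$th (i.e.\ last) rung really is $\ga_k$ and not $\ga_{k-1}$ or $\ga_{k+1}$ — this is a matter of matching the base case $\ga_1 = 1$ (a single rung has resistance $1$, and indeed a $1$-column ``ladder'' is just one edge) against \eqref{eq:rung rec}. One has to also check there is no extra conductance ``leaking'' around: once the left block is replaced by its equivalent resistor $\ga_k$ between $1\otimes k$ and $2\otimes k$, and the right block by $\ga_{n-k}$ between $1\otimes(k+1)$ and $2\otimes(k+1)$, the remaining network between $i\otimes k$ and $i\otimes(k+1)$ is literally a four-node circuit (a quadrilateral: the two rungs' worth of equivalent resistors $\ga_k,\ga_{n-k}$, plus the two rails of resistance $1$ each), so the parallel law applies cleanly with no Y--$\Delta$ needed. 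I would also note the degenerate endpoints: the formula for $1<k<n$ does not need to specialize to the $k=1$ case because the $(i,1)$ rail has $i\otimes 1$ of degree $2$, which breaks the symmetry of the reduction, which is exactly why that case is stated separately as $\ga_n$.
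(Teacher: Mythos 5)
Your argument is correct and is essentially the paper's own proof of this lemma, just spelled out in more detail: replace the sub-ladders on either side of the rail by equivalent resistors $\ga_k$ and $\ga_{n-k}$ across their terminal rungs, then apply the series and parallel laws to the resulting quadrilateral. (One small aside: your closing remark is slightly off---since $\ga_1=1$, the general formula at $k=1$ does simplify to $\ga_n$ via the recurrence \eqref{eq:rung rec}, so that case is stated separately only because the answer simplifies, not because the reduction breaks.)
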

\begin{proof}
    (See \figref{fig:mid rail}.) The resistance across this rail in $G^{(n)}$ is the same as the resistance across two nodes with 1 and $\ga_k+\ga_{n-k}+1$ resistors, so by the parallel law the result follows. 
\end{proof}
\begin{figure}
    \centering
    \begin{subfigure}[b]{0.45\textwidth}
        \centering
        \includegraphics[width=\textwidth]{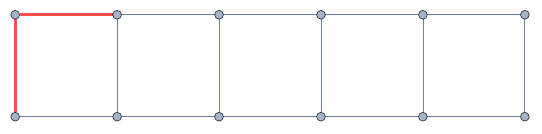}
        \caption{The edges incident to a ``corner'' vertex.}
        \label{fig:corner incidents}
    \end{subfigure}
    \begin{subfigure}[b]{0.45\textwidth}
        \centering
        \includegraphics[width=\textwidth]{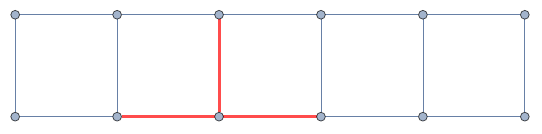}
        \caption{The edges incident to a ``generic'' vertex.}
        \label{fig:gen incidents}
    \end{subfigure}
    \caption{Possibilities for edges incident to vertices in ladders, highlighted in red.}
    \label{fig:ladder incidents}
\end{figure}
\begin{proof}[{Proof of \propref{prop:ladder}}]
    (See \figref{fig:corner incidents}.) When $1<k<n$, by Lemmas \ref{lem:gen rung res} and \ref{lem:gen rail res},
    \begin{align*}
        \bp^{(n)}_{i\otimes k}&=1-\half\lpr{\gw^{(n)}_{e^{(n)}_k}+\gw^{(n)}_{e^{(n)}_{i\otimes(k-1)}}+\gw^{(n)}_{e^{(n)}_{i\otimes k}}}\\
            &=-\half\frac{(\ga_{k-1}+1)(\ga_{n-k}+1)-3}{(\ga_{k-1}+3)(\ga_{n-k}+3)-1}
    \end{align*}
    which we recognize as negative since $\ga_{k-1},\ga_{n-k}>\sqrt{3}-1$. 

    (See \figref{fig:gen incidents}.) When $k=1$, 
    $$\bp^{(n)}_{i\otimes1}=1-\ga_n$$
    and so we apply \lemref{lem:end rung res}. The same holds for $\bp^{(n)}_{i\otimes n}$ by symmetry. 
\end{proof}

\section{General bounds and future questions}\label{sec:gen}

It remains unresolved how to understand the curvature in more general graph products. A first step towards this is to be able to bound, if not compute exactly, the effective resistance across individual edges in a graph product. As before, consider $G=G^{(1)}\square G^{(2)}$ and keep all other notation. We consider the edge $e$ connecting $v^{(1)}\otimes v^{(2)}$ and $w^{(1)}\otimes v^{(2)}$, where $e^{(1)}$ connects $v^{(1)}$ and $w^{(1)}$ in $E^{(1)}$. 

To obtain an upper bound on $\gw_e$ in terms of $\gw=\gw^{(1)}_{e^{(1)}}$, we suppose that there is a $r^{(2)}$-depth $d^{(2)}$-regular tree rooted at $v^{(2)}$ in $G^{(2)}$. Taking the product with the graph which is a single edge of resistance $\gw$, we can see that the effective resistance across the edge connecting the two roots is $f(r^{(2)})$, where $f(0)=\gw$ and $f(k)\inv=\gw\inv+(d-1)(2+f(k-1))\inv$. (For instance, the case $r^{(2)}=1$ and $d^{(2)}=5$ is depicted in \figref{fig:star}.) We then apply curvature monotonicity, and in the case $r^{(2)}=1$ recover 
\begin{equation}
    \gw_e\le\gw\frac{1+\frac{2}{\gw}}{d+1+\frac{2}{\gw}}.\label{eq:ub}
\end{equation}
This bound is not so interesting when $\frac{1}{\gw}\gg d$ but is more powerful for larger $\gw$. 
\begin{figure}
    \centering
    \includegraphics[width=0.5\textwidth]{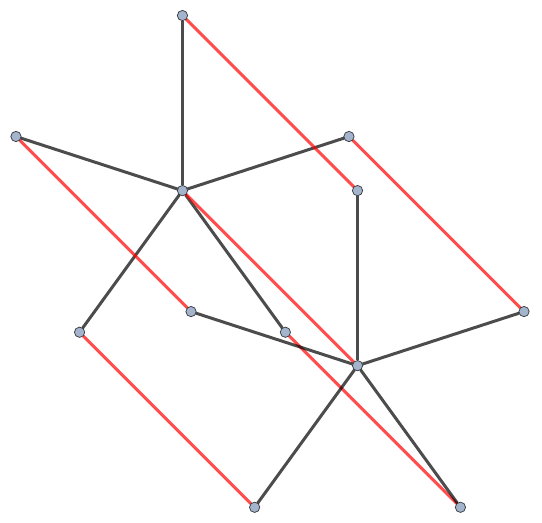}
    \caption{A neighborhood of a vertex, times an edge, in the degree-5 case. The red edges are all of resistance $\gw$ and the black edges are all of weight 1.}
    \label{fig:star}
\end{figure}

To obtain a lower bound we work directly with 
$$L^+={\sum_{\substack{j^{(1)},j^{(2)}\\\text{$\gl_{j^{(1)}}^{(1)}$, $\gl_{j^{(2)}}^{(2)}$ not both 0}}}}\frac{1}{\gl_{j^{(1)}}^{(1)}+\gl_{j^{(2)}}^{(2)}}\ket{\bv^{(1)}_{j^{(1)}}}\bra{\bv^{(1)}_{j^{(1)}}}\otimes\ket{\bv^{(2)}_{j^{(2)}}}\bra{\bv^{(2)}_{j^{(2)}}}$$
and do casework on whether either of $\gl_{j^{(1)}}^{(1)}$ or $\gl_{j^{(2)}}^{(2)}$ are 0. From this, if $n^{(2)}=\#V^{(2)}$, then one may show from this analysis that 
\begin{equation}
    \gw_e\ge\lpr{\frac{1}{n^{(2)}}+\lpr{1-\frac{1}{n^{(2)}}}\frac{\gl^{(1)}_2}{\gl^{(1)}_2+\gl^{(2)}_{n^{(2)}}}}\gw.\label{eq:lb}
\end{equation}

Unfortunately, both of these bounds \eqref{eq:ub} and \eqref{eq:lb} appear to be too weak to prove any results of interest. We hope to be able to better understand effective resistances in these settings. Another direction of interest could be to graphs with non-uniform resistances; the derivation of \eqref{eq:lb} already allows for different resistances, as well as different resistances in $G^{(1)}$ for \eqref{eq:ub}, but there is much to be explored. 

\section*{Acknowledgements}

This work started at the 2023 American Mathematical Society Mathematical Research Communities on Ricci Curvatures of Graphs and Applications to Data Science, which was supported by the National Science Foundation under Grant Number DMS 1916439. 
We thank Fan Chung, Mark Kempton, Wuchen Li, Linyuan Lu, and Zhiyu Wang for organizing this workshop.
Our investigation was greatly assisted by the online graph curvature calculator \cite{CKLLS-calculator}.
WL was also partially supported by NSF RTG Grant DMS 2038080. ZS was additionally supported by NSF grant DGE 2146752.

\bibliographystyle{alpha}
\bibliography{resistance}
\appendix
\section{Code}\label{sec:code}

Here is the Mathematica code used in \secref{sec:wide}. 
\begin{lstlisting}[language=Mathematica]
tens = KroneckerProduct;
EffectiveResistances[G_] :=
 Module[{L = KirchhoffMatrix[G](* unnormalized Laplacian *), 
   n = Length[VertexList[G]], PI},
  PI = Inverse[L + ConstantArray[1/n, {n, n}]] - 
    ConstantArray[1/n, {n, n}](* inverse of shifted Laplacian *);
  Table[PI[[i, i]] + PI[[j, j]] - 2 PI[[i, j]], {i, 1, n}, {j, 1, 
    n}] (* effective resistances *)
  ]
GridBoundaryNodeCurvatures[m_, n_] := Module[
  {G = GridGraph[{m, n}], 
   B = Select[
     Flatten[Table[{a, b}, {a, 0, n - 1}, {b, 1, m}], 
      1], (#[[1]] == 0 || #[[1]] == n - 1 || #[[2]] == 1 || #[[2]] == 
         m) &], E, nbhd, convert},
  E = EffectiveResistances[G];
  nbhd[v_] := 
   With[{a = v[[1]], b = v[[2]]}, 
    If[a > 0, {{a - 1, b}}, {}] \[Union] 
     If[a < n - 1, {{a + 1, b}}, {}] \[Union] 
     If[b > 1, {{a, b - 1}}, {}] \[Union] If[b < m, {{a, b + 1}}, {}]];
  convert[v_] := With[{a = v[[1]], b = v[[2]]}, a m + b];
  Table[
   convert[v] ->
    1 - 1/2 Total[Table[E[[convert[v], convert[w]]], {w, nbhd[v]}]]
   , {v, B}]
  ]
AllNodeCurvaturesInProduct[L_, d_](* 
  L is the Laplacian of a graph, whose d-
  fold Cartesian product is to be taken *):=
  Module[{Lap, it, PI, n = Length[L]^d, O},
   it[A_, k_] := 
    If[k <= 1, A, 
     With[{a = Length[A]}, 
      tens[it[A, k - 1], IdentityMatrix[a]] + 
       tens[IdentityMatrix[a^(k - 1)], A]]];
   Lap = it[L, d];
   PI = Inverse[Lap + ConstantArray[1/n, {n, n}]] - 
     ConstantArray[1/n, {n, n}](* inverse of shifted Laplacian *);
   O = 
    Table[PI[[i, i]] + PI[[j, j]] - 2 PI[[i, j]], {i, 1, n}, {j, 1, 
      n}] (* effective resistances *);
   Table[1 + 1/2 Dot[O[[i]], Lap[[i]]], {i, 1, n}]
   ];
\end{lstlisting}

\end{document}